\newcommand{\N}{\mathbb{N}}
\newcommand{\R}{\mathbb{R}}
\newcommand{\C}{\mathbb{C}}
\newcommand{\fonction}[5]{\begin{array}[t]{lrcl}#1 :&#2 &\longrightarrow &#3\\
&#4& \longmapsto &#5 \end{array}}
\newcommand{\fonctionsansdef}[3]{#1 : #2 \longrightarrow #3}
\newtheorem{theorem}{Theorem}
\newtheorem{proposition}[theorem]{Proposition}
\newtheorem{definition}[theorem]{Definition}
\newtheorem{remark}[theorem]{Remark}
\title[Helmholtz theorem for nondifferentiable Hamiltonian systems]{Helmholtz
theorem for nondifferentiable\\
Hamiltonian systems in the framework\\
of Cresson's quantum calculus}
\author[F. Pierret]{Fr\'ed\'eric Pierret}
\address{Institut de M\'ecanique C\'eleste et de Calcul des \'Eph\'em\'erides,\\
Observatoire de Paris, 75014 Paris, France\\
{\tt frederic.pierret@obspm.fr}}
\author[D. F. M. Torres]{Delfim F. M. Torres}
\address{Center for Research and Development in Mathematics and Applications (CIDMA),
Department of Mathematics, University of Aveiro, 3810-193 Aveiro, Portugal\\
{\tt delfim@ua.pt}}
\begin{document}

\begin{abstract}
We derive the Helmholtz theorem for nondifferentiable Hamiltonian systems
in the framework of Cresson's quantum calculus. Precisely, we give a theorem
characterizing nondifferentiable equations, admitting a Hamiltonian formulation.
Moreover, in the affirmative case, we give the associated Hamiltonian.
\end{abstract}

\subjclass[2010]{49N45, 70S05}

\keywords{Cresson's quantum calculus, nondifferentiable calculus of variations,
nondifferentiable Hamiltonian systems, inverse problem of the calculus of variations.}

\maketitle

% ------------------------------------------

\section{Introduction}

Several types of quantum calculus are available in the literature, including
Jackson's quantum calculus \cite{MR1865777,MR2966852}, Hahn's quantum calculus
\cite{MR2861326,MR3028184,MR2733985}, the time-scale $q$-calculus
\cite{bohn,MR2793813}, the power quantum calculus \cite{MR2918250}, and the
symmetric quantum calculus \cite{MR3031158,MR3110294,MR3110279}. Cresson
introduced in 2005 his quantum calculus on a set of H\"{o}lder functions
\cite{MR2138974}. This calculus attracted attention due to its applications
in physics and the calculus of variations and has been further developed
by several different authors (see \cite{MR3338133,MR2546784,MR2738030,MR2549615}
and references therein). Cresson's calculus of 2005 \cite{MR2138974} presents,
however, some difficulties, and in 2011 Cresson and Greff improved it
\cite{cresson_greff,MR2798411}. Indeed, the quantum calculus of \cite{MR2138974} 
let a free parameter, which is present in all the computations. Such parameter
is certainly difficult to interpret. The new calculus of \cite{cresson_greff,MR2798411}
bypasses the problem by considering a quantity that is free of extra parameters 
and reduces to the classical derivative for differentiable functions.
It is this new version of 2011 that we consider here, with a brief review 
of it being given in Section~\ref{sec:reminder}. Along the text, 
by \emph{Cresson's calculus} we mean this quantum version of 2011
\cite{cresson_greff,MR2798411}. For the state of the art on the quantum calculus
of variations we refer the reader to the recent book \cite{MR3184533}.
With respect to Cresson's approach, the quantum calculus of variations 
is still in its infancy: see \cite{MR3338133,MR2653992,MR2861739,cresson_greff,MR2798411,MR3040924}.
In \cite{cresson_greff} nondifferentiable Euler--Lagrange equations are used
in the study of PDEs. Euler--Lagrange equations for variational functionals
with Lagrangians containing multiple quantum derivatives, depending on a parameter
or containing higher-order quantum derivatives, are studied in \cite{MR2653992}.
Variational problems with constraints, with one and more than one independent variable,
of first and higher-order type, are investigated in \cite{MR2861739}. Recently,
problems of the calculus of variations and optimal control with time delay
were considered \cite{MR3040924}. In \cite{MR2798411}, a Noether type theorem
is proved but only with the momentum term. This result is further extended in
\cite{MR3294632} by considering invariance transformations that also change
the time variable, thus obtaining not only the generalized momentum term
of \cite{MR2798411} but also a new energy term. In \cite{MR3338133},
nondifferentiable variational problems with a free terminal point,
with or without constraints, of first and higher-order, are investigated.
Here, we continue to develop Cresson's quantum calculus in obtaining
a result for Hamiltonian systems and by considering the so-called
inverse problem of the calculus of variations.

A classical problem in Analysis is the well-known {\it Helmholtz's inverse
problem of the calculus of variations}: find a necessary and sufficient
condition under which a (system of) differential equation(s) can be written
as an Euler--Lagrange or a Hamiltonian equation and, in the affirmative case,
find all possible Lagrangian or Hamiltonian formulations. This condition
is usually called the \textit{Helmholtz condition}.
The Lagrangian Helmholtz problem has been studied and solved by Douglas
\cite{doug}, Mayer \cite{maye} and Hirsch \cite{hirs,hirs2}. The Hamiltonian
Helmholtz problem has been studied and solved, up to our knowledge,
by Santilli in his book \cite{santilli}. Generalization of this problem
in the \emph{discrete calculus} of variations framework has been done in
\cite{bourdin-cresson,hydon1}, in the discrete Lagrangian case.
In the case of \emph{time-scale calculus}, that is, a mixing between continuous
and discrete subintervals of time, see \cite{MR3198223} for a
necessary condition for a dynamic integrodifferential equation
to be an Euler--Lagrange equation on time-scales. For the Hamiltonian case
it has been done for the discrete calculus of variations in \cite{opri1}
using the framework of \cite{mars} and in \cite{cresson-pierret2} using
a discrete embedding procedure derived in \cite{cresson-pierret1}. In the case
of \emph{time-scale calculus} it has been done in \cite{pierret_helmholtz_ts};
for the \emph{Stratonovich stochastic calculus} see \cite{pierret_helmholtz_sto}.
Here we give the Helmholtz theorem for Hamiltonian systems in the case of
\emph{nondifferentiable Hamiltonian systems} in the framework of Cresson's
quantum calculus. By definition, the nondifferentiable calculus extends
the differentiable calculus. Such as in the discrete, time-scale,
and stochastic cases, we recover the same conditions
of existence of a Hamiltonian structure.

The paper is organized as follows. In Section~\ref{sec:reminder},
we give some generalities and notions about the nondifferentiable
calculus introduced in \cite{cresson_greff}, the so-called
\emph{Cresson's quantum calculus}. In Section~\ref{sec:reminderhamsys},
we remind definitions and results about classical and nondifferentiable
Hamiltonian systems. In Section~\ref{sec:nondiffhelmholtz}, we give
a brief survey of the classical Helmholtz Hamiltonian problem and then
we prove the main result of this paper---the nondifferentiable Hamiltonian
Helmholtz theorem. Finally, we give two applications of our results in
Section~\ref{sec:Appl}, and we end in Section~\ref{sec:conclu}
with conclusions and future work.

% ------------------------------------------

\section{Cresson's Quantum Calculus}
\label{sec:reminder}

We briefly review the necessary concepts and results
of the quantum calculus \cite{cresson_greff}.

% ------------------------------------------

\subsection{Definitions}

Let $\mathbb{X}^d$ denote the set $\mathbb{R}^{d}$ or $\mathbb{C}^{d}$,
$d \in \mathbb{N}$, and $I$ be an open set in $\mathbb{R}$ with $[a,b]\subset I$,
$a<b$. We denote by $\mathcal{F}\left(I,\mathbb{X}^d\right)$ the set of functions
$f:I \rightarrow \mathbb{X}^d$ and by $\mathcal{C}^{0}\left(I,\mathbb{X}^d\right)$
the subset of functions of $\mathcal{F}\left(I,\mathbb{X}^d\right)$ 
which are continuous.

\begin{definition}[H\"{o}lderian functions \cite{cresson_greff}]
Let $f\in \mathcal{C}^0\left(I, \mathbb{R}^{d}\right)$. Let $t \in I$.
Function $f$ is said to be $\alpha$-H\"{o}lderian, $0<\alpha<1$, 
at point $t$ if there exist positive constants $\epsilon>0$ and $c > 0$
such that $|t-t'|\leqslant\epsilon$ implies 
$\|f(t)-f(t')\|\leqslant c |t-t'|^{\alpha}$ for all $t' \in I$, 
where $\|\cdot\|$ is a norm on $\mathbb{R}^{d}$.
\end{definition}

The set of H\"{o}lderian functions of H\"{o}lder exponent $\alpha$, for some
$\alpha$, is denoted by $H^\alpha(I,\mathbb{R}^{d})$. The quantum derivative
is defined as follows.

\begin{definition}[The $\epsilon$-left and $\epsilon$-right
quantum derivatives  \cite{cresson_greff}]
Let $f\in \mathcal{C}^{0}\left(I, \mathbb{R}^{d}\right)$. For all $\epsilon>0$,
the $\epsilon$-left and $\epsilon$-right quantum derivatives of $f$, denoted,
respectively, by $d_{\epsilon}^{-}f$ and $d_{\epsilon}^{+}f$, are defined by
\begin{equation}
d_{\epsilon}^{-}f(t)=\frac{f(t)-f(t-\epsilon)}{\epsilon}
\quad \text{ and } \quad
d_{\epsilon}^{+}f(t)=\frac{f(t+\epsilon)-f(t)}{\epsilon} \, .
\end{equation}
\end{definition}

\begin{remark}
The $\epsilon$-left and $\epsilon$-right quantum derivatives of a continuous
function $f$ correspond to the classical derivative of the $\epsilon$-mean
function $f_{\epsilon}^{\sigma}$ defined by
\begin{equation}
f_{\epsilon}^{\sigma}(t)=\frac{\sigma}{\epsilon}
\int_{t}^{t+\sigma\epsilon}f(s)ds\, ,
\quad \sigma=\pm \, .
\end{equation}
\end{remark}

The next operator generalizes the classical derivative.

\begin{definition}[The $\epsilon$-scale derivative \cite{cresson_greff}]
\label{def:scaleder}
Let $f\in \mathcal{C}^{0}\left(I,\mathbb{R}^{d}\right)$. For all $\epsilon>0$,
the $\epsilon$-scale derivative of $f$, denoted by $\frac{\Box_{\epsilon}f}{\Box t}$,
is defined by
\begin{gather}
\frac{\Box_{\epsilon}f}{\Box t}
=\frac{1}{2}\left[\left(d_{\epsilon}^{+}f
+d_{\epsilon}^{-}f\right)
+i\mu\left(d_{\epsilon}^{+}f
-d_{\epsilon}^{-}f\right)\right],
\end{gather}
where $i$ is the imaginary unit and $\mu \in \{-1,1,0,-i,i\}$.
\end{definition}

\begin{remark}
If $f$ is differentiable, then one can take the limit of the scale derivative
when $\epsilon$ goes to zero. We then obtain the classical derivative
$\frac{df}{dt}$ of $f$.
\end{remark}

We also need to extend the scale derivative to complex valued functions.

\begin{definition}[See \cite{cresson_greff}]
Let $f\in \mathcal{C}^{0}\left(I,\mathbb{C}^{d}\right)$ be a continuous
complex valued function. For all $\epsilon>0$, the $\epsilon$-scale derivative
of $f$, denoted by $\frac{\Box_{\epsilon}f}{\Box t}$, is defined by
\begin{gather}
\frac{{\Box}_{\epsilon}f}{{\Box}t}
=\frac{{\Box}_{\epsilon}\textrm{Re}(f)}{\Box t}
+i\frac{\Box_{\epsilon}\textrm{Im}(f)}{\Box t} \, ,
\end{gather}
where $\textrm{Re}(f)$ and $\textrm{Im}(f)$ denote the real
and imaginary part of $f$, respectively.
\end{definition}

In Definition~\ref{def:scaleder}, the $\epsilon$-scale derivative depends
on $\epsilon$, which is a free parameter related to the smoothing order
of the function. This brings many difficulties in applications to physics,
when one is interested in particular equations that do not depend
on an extra parameter. To solve these problems, the authors of \cite{cresson_greff}
introduced a procedure to extract information independent of $\epsilon$
but related with the mean behavior of the function.

\begin{definition}[See \cite{cresson_greff}]
Let ${\mathcal{C}^0_{conv}}\left(I\times ]0,1],\mathbb{R}^{d}\right)
\subseteq {\mathcal{C}^0}\left(I\times]0,1],\mathbb{R}^{d}\right)$
be such that for any function $f \in {\mathcal{C}^0_{conv}}\left(I\times ]0,1],
\mathbb{R}^{d}\right)$ the $\lim_{\epsilon\to 0}f(t,\epsilon)$ exists for any
$t\in I$. We denote by $E$ a complementary space of
${\mathcal{C}^0_{conv}}\left(I\times ]0,1],\mathbb{R}^{d}\right)$
in ${\mathcal{C}^0}\left(I\times ]0,1],\mathbb{R}^{d}\right)$.
We define the projection map $\pi$ by
\begin{equation}
\fonction{\pi}{{\mathcal{C}^0_{conv}}\left(I\times ]0,1],\mathbb{R}^{d}\right)
\oplus E}{\mathcal{C}^0_{conv}(I\times\left]0,1],
\mathbb{R}^{d}\right)}{f_{conv}+f_E}{f_{conv}}
\end{equation}
and the operator $\left< \cdot \right>$ by
\begin{equation}
\fonction{\left< \cdot \right>}{{\mathcal{C}^0}\left(I\times ]0,1],
\mathbb{R}^{d}\right)}{{\mathcal{C}^0}\left(I,
\mathbb{R}^{d}\right)}{f}{\left< f \right>: t\mapsto
\displaystyle\lim_{\epsilon\to 0}\pi(f)(t,\epsilon).}
\end{equation}
\end{definition}

The quantum derivative of $f$ without the dependence of $\epsilon$
is introduced in \cite{cresson_greff}.

\begin{definition}[See \cite{cresson_greff}]
The quantum derivative of $f$ in the space $\mathcal{C}^{0}\left(I,
\mathbb{R}^{d}\right)$ is given by
\begin{equation}
\label{eq:quanDer}
\frac{\Box f}{\Box t}=\left<\frac{{\Box_{\epsilon}}f}{\Box t} \right>.
\end{equation}
\end{definition}

The quantum derivative \eqref{eq:quanDer} has some nice properties. 
Namely, it satisfies a Leibniz rule and a version 
of the fundamental theorem of calculus.

\begin{theorem}[The quantum Leibniz rule \cite{cresson_greff}]
\label{theo:mult}
Let $\alpha+\beta>1$. For $f\in H^\alpha\left(I,\mathbb{R}^{d}\right)$
and $g\in H^\beta\left(I,\mathbb{R}^{d}\right)$, one has
\begin{equation}
\label{eq:leibniz}
\frac{\Box}{\Box t}(f\cdot g)(t)=\frac{\Box f(t)}{\Box t}
\cdot g(t)+f(t)\cdot\frac{\Box g(t)}{\Box t}\,.
\end{equation}
\end{theorem}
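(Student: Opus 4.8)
The plan is to establish the product rule first at the level of the $\epsilon$-scale derivative, with an explicit remainder, and only afterwards pass through the operator $\left<\cdot\right>$. First I would record the exact discrete product rules for the one-sided operators. Telescoping $f(t)g(t)-f(t-\epsilon)g(t-\epsilon)$ as $[f(t)-f(t-\epsilon)]g(t)+f(t-\epsilon)[g(t)-g(t-\epsilon)]$, and expanding analogously at $t+\epsilon$, gives
\[
d_{\epsilon}^{-}(fg)(t)=d_{\epsilon}^{-}f(t)\cdot g(t)+f(t)\cdot d_{\epsilon}^{-}g(t)+R_{\epsilon}^{-}(t),
\]
\[
d_{\epsilon}^{+}(fg)(t)=d_{\epsilon}^{+}f(t)\cdot g(t)+f(t)\cdot d_{\epsilon}^{+}g(t)+R_{\epsilon}^{+}(t),
\]
where $R_{\epsilon}^{-}(t)=[f(t-\epsilon)-f(t)]\cdot d_{\epsilon}^{-}g(t)$ and $R_{\epsilon}^{+}(t)=d_{\epsilon}^{+}f(t)\cdot[g(t+\epsilon)-g(t)]$.

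The key step is to estimate these remainders. Since $f\in H^{\alpha}$ and $g\in H^{\beta}$, we have $\|f(t\pm\epsilon)-f(t)\|\leqslant c_{f}\epsilon^{\alpha}$ and $\|g(t\pm\epsilon)-g(t)\|\leqslant c_{g}\epsilon^{\beta}$ for $\epsilon$ small, so each remainder is bounded in norm by $c_{f}c_{g}\,\epsilon^{\alpha+\beta-1}$. This is exactly where the hypothesis $\alpha+\beta>1$ enters: it forces $R_{\epsilon}^{\pm}(t)\to 0$ as $\epsilon\to 0$. This Hölder estimate is the heart of the argument; everything else is bookkeeping.

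Next I would assemble the scale derivative. Because $\frac{\Box_{\epsilon}}{\Box t}$ is the fixed linear combination $\tfrac12[(d_{\epsilon}^{+}+d_{\epsilon}^{-})+i\mu(d_{\epsilon}^{+}-d_{\epsilon}^{-})]$, substituting the two expansions and regrouping the product terms by their $f$- and $g$-factors collapses the main part to exactly $\frac{\Box_{\epsilon}f}{\Box t}\cdot g+f\cdot\frac{\Box_{\epsilon}g}{\Box t}$, while the leftover is the same linear combination of $R_{\epsilon}^{+}$ and $R_{\epsilon}^{-}$. Hence
\[
\frac{\Box_{\epsilon}}{\Box t}(fg)=\frac{\Box_{\epsilon}f}{\Box t}\cdot g+f\cdot\frac{\Box_{\epsilon}g}{\Box t}+\rho_{\epsilon},\qquad \rho_{\epsilon}\to 0 .
\]

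Finally I would apply $\left<\cdot\right>$. Since $\left<\cdot\right>$ is linear (being the composition of the linear projection $\pi$ with a limit) and since $f$ and $g$ do not depend on $\epsilon$, multiplication by these fixed functions commutes with the projection and the limit; moreover $\rho_{\epsilon}$ lies in the convergent part with zero limit, so $\left<\rho_{\epsilon}\right>=0$. Taking $\left<\cdot\right>$ of the displayed identity and recalling $\frac{\Box f}{\Box t}=\left<\frac{\Box_{\epsilon}f}{\Box t}\right>$ then yields \eqref{eq:leibniz}. Beyond the Hölder estimate, the only point demanding care is this commutation of $\left<\cdot\right>$ with multiplication by the $\epsilon$-independent factors, which rests on the chosen complementary splitting of $\mathcal{C}^{0}$ being compatible with such multiplication.
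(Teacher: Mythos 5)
Your proof is correct, and it coincides with the paper's treatment in the only sense available: the paper states this theorem without proof, importing it from \cite{cresson_greff}, and your argument---the exact discrete product rules for $d_{\epsilon}^{\pm}$, the H\"older bound $\|R_{\epsilon}^{\pm}(t)\|\leqslant c_f c_g\,\epsilon^{\alpha+\beta-1}$ where the hypothesis $\alpha+\beta>1$ does all the work, then linearity of the $\epsilon$-scale derivative and passage through $\left<\cdot\right>$---is precisely the standard proof given in that reference. Your closing caveat is also the right one to flag: the commutation of $\left<\cdot\right>$ with multiplication by $\epsilon$-independent continuous factors is not automatic for an arbitrary complement $E$ of $\mathcal{C}^0_{conv}$, and is exactly the compatibility that the construction of \cite{cresson_greff} assumes in choosing $E$, while the remainder itself causes no trouble since $\rho_{\epsilon}$ converges pointwise to zero and hence lies in $\mathcal{C}^0_{conv}$ with $\left<\rho_{\epsilon}\right>=0$.
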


\begin{remark}
For $f\in \mathcal{C}^1\left(I, \mathbb{R}^{d}\right)$
and $g\in \mathcal{C}^1\left(I, \mathbb{R}^{d}\right)$,
one obtains from \eqref{eq:leibniz} the classical Leibniz rule:
$(f\cdot g)'=f'\cdot g+f\cdot g'$.
\end{remark}

\begin{definition}
We denote by $\mathcal{C}^1_\Box$ the set of continuous functions
$q\in \mathcal{C}^0([a,b],\mathbb{R}^d)$ such that $\frac{\Box q}{\Box t}
\in \mathcal{C}^0\left(I, \mathbb{R}^{d}\right)$.
\end{definition}

\begin{theorem}[The quantum version
of the fundamental theorem of calculus \cite{cresson_greff}]
Let $f\in \mathcal{C}^1_\Box([a,b],\mathbb{R}^d)$ be such that
\begin{equation}
\label{eq:divcond}
\lim_{\epsilon\to0}
\int_{a}^{b}\left(\frac{\Box_\epsilon f}{\Box t}\right)_E(t)dt=0.
\end{equation}
Then,
\begin{equation}
\int^{b}_{a} \frac{\Box f}{\Box t}(t)\, dt=f(b)-f(a).
\end{equation}
\end{theorem}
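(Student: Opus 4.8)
The plan is to exploit the defining decomposition of the $\epsilon$-scale derivative into its convergent part and its $E$-part, namely $\frac{\Box_\epsilon f}{\Box t} = \pi\bigl(\frac{\Box_\epsilon f}{\Box t}\bigr) + \bigl(\frac{\Box_\epsilon f}{\Box t}\bigr)_E$, integrate this identity over $[a,b]$, and let $\epsilon \to 0$. The hypothesis \eqref{eq:divcond} is precisely what is needed to annihilate the contribution of the $E$-component, so that
$$\lim_{\epsilon\to 0}\int_a^b \frac{\Box_\epsilon f}{\Box t}(t)\, dt = \lim_{\epsilon\to 0}\int_a^b \pi\Bigl(\frac{\Box_\epsilon f}{\Box t}\Bigr)(t)\, dt.$$
It then remains to evaluate both sides separately and match them.

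Next I would compute the left-hand limit directly from the definition of the scale derivative. Since $\frac{\Box_\epsilon f}{\Box t}$ is a fixed linear combination of the difference quotients $d_\epsilon^\pm f$, it suffices to integrate each of these. A translation of the integration variable telescopes the integrals: for the left derivative one finds $\int_a^b d_\epsilon^- f = \frac{1}{\epsilon}\bigl(\int_{b-\epsilon}^b f - \int_{a-\epsilon}^a f\bigr)$, and analogously $\int_a^b d_\epsilon^+ f = \frac{1}{\epsilon}\bigl(\int_b^{b+\epsilon} f - \int_a^{a+\epsilon} f\bigr)$. Continuity of $f$ (together with $[a,b]\subset I$ open, so that the shifted endpoints remain in $I$ for small $\epsilon$) gives $\frac{1}{\epsilon}\int_{b-\epsilon}^b f \to f(b)$ and likewise for the other three averages, whence both $\int_a^b d_\epsilon^\pm f \to f(b)-f(a)$. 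Consequently the symmetric combination $d_\epsilon^+ + d_\epsilon^-$ contributes $2\bigl(f(b)-f(a)\bigr)$ while the antisymmetric one $d_\epsilon^+ - d_\epsilon^-$ (which carries the factor $i\mu$) contributes $0$, so the left-hand limit equals $f(b)-f(a)$ irrespective of the value of $\mu$.

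For the right-hand side I would use that, by the definitions of $\langle\cdot\rangle$ and of $\mathcal{C}^1_\Box$, the convergent part $\pi\bigl(\frac{\Box_\epsilon f}{\Box t}\bigr)(t)$ tends pointwise to the continuous quantum derivative $\frac{\Box f}{\Box t}(t)$ as $\epsilon\to 0$. Interchanging this limit with the integral then yields $\lim_{\epsilon\to 0}\int_a^b \pi\bigl(\frac{\Box_\epsilon f}{\Box t}\bigr) = \int_a^b \frac{\Box f}{\Box t}$, and combining with the previous paragraph gives the claimed identity. The delicate point, and the one I would treat most carefully, is exactly this interchange: pointwise convergence of the convergent parts alone does not license passing the limit through the integral, so one must either invoke uniform (or dominated) convergence of $\pi\bigl(\frac{\Box_\epsilon f}{\Box t}\bigr)$ to $\frac{\Box f}{\Box t}$ on the compact interval $[a,b]$, which is the natural regularity implicit in the space $\mathcal{C}^1_\Box$, or reorganize the argument so that the only limit taken inside an integral is the explicit translation limit handled in the second step.
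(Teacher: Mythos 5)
This theorem is stated in the paper purely as background, imported from \cite{cresson_greff} without proof, so there is no in-paper argument to compare against; your proposal has to be judged against the original proof in that reference, with which it essentially coincides in structure. Your two computational steps are correct: integrating the decomposition $\frac{\Box_\epsilon f}{\Box t}=\pi\bigl(\frac{\Box_\epsilon f}{\Box t}\bigr)+\bigl(\frac{\Box_\epsilon f}{\Box t}\bigr)_E$ and invoking \eqref{eq:divcond} removes the $E$-component, and the change of variables $t\mapsto t\pm\epsilon$ telescopes the difference quotients, giving $\int_a^b d_\epsilon^{\pm}f(t)\,dt\to f(b)-f(a)$ by continuity of $f$ and openness of $I$ (so the shifted endpoints stay in $I$ for small $\epsilon$); hence the antisymmetric term carrying $i\mu$ vanishes in the limit and $\lim_{\epsilon\to0}\int_a^b\frac{\Box_\epsilon f}{\Box t}(t)\,dt=f(b)-f(a)$, independently of $\mu$, exactly as you say.

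The one correction concerns your closing remark. The interchange of $\lim_{\epsilon\to0}$ with $\int_a^b$ on the convergent part cannot be ``reorganized away'': the conclusion involves $\int_a^b\frac{\Box f}{\Box t}(t)\,dt$, and the decomposition gives access to this quantity only through $\int_a^b\pi\bigl(\frac{\Box_\epsilon f}{\Box t}\bigr)(t,\epsilon)\,dt$, whose limit must be identified with the integral of the pointwise limit. Condition \eqref{eq:divcond} controls only the $E$-part and says nothing about the rate at which the convergent part converges; and membership in $\mathcal{C}^1_\Box$ asserts only that the limit function $\frac{\Box f}{\Box t}$ is continuous, not that $\pi\bigl(\frac{\Box_\epsilon f}{\Box t}\bigr)\to\frac{\Box f}{\Box t}$ uniformly on $[a,b]$ --- a moving-bump family shows that pointwise convergence of continuous functions to a continuous limit does not by itself pass through the integral. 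So some uniformity or domination hypothesis is tacitly in force here; this looseness belongs to the framework of \cite{cresson_greff} itself rather than to your write-up, and once that standing assumption is made explicit your proof is complete and correct.
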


% ------------------------------------------

\subsection{Nondifferentiable calculus of variations}

In \cite{cresson_greff} the calculus of variations with quantum derivatives is
introduced and respective Euler--Lagrange equations derived
without the dependence of $\epsilon$.

\begin{definition}
An admissible Lagrangian $L$ is a continuous function
$\fonctionsansdef{L}{\R\times \R^d\times \C^d}{\C}$ such that $L(t,x,v)$
is holomorphic with respect to $v$ and differentiable with respect to $x$.
Moreover, $L(t,x,v) \in \mathbb{R}$ when $v\in\R^d$;
$L(t,x,v) \in \mathbb{C}$ when $v\in\C^d$.
\end{definition}

An admissible Lagrangian function
$\fonctionsansdef{L}{\R\times \R^d\times \C^d}{\C}$
defines a functional on $\mathcal{C}^1(I,\R^d)$, denoted by
\begin{equation}
\fonction{\mathcal{L}}{\mathcal{C}^1(I,\R^d)}{\R}{q}{\int_{a}^{b}L(t,q(t),\dot{q}(t))dt.}
\end{equation}
Extremals of the functional $\mathcal{L}$ can be characterized by the
well-known Euler--Lagrange equation (see, e.g., \cite{arno}).

\begin{theorem}
The extremals $q\in \mathcal{C}^1(I,\R^d)$ of $\mathcal{L}$ coincide with
the solutions of the Euler--Lagrange equation
\begin{equation}
\frac{d}{dt}\left[\frac{\partial L}{\partial v}\left(t,q(t),\dot{q}(t)\right)\right]
=\frac{\partial L}{\partial x}\left(t,q(t),\dot{q}(t)\right).
\end{equation}
\end{theorem}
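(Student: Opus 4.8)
The plan is to use the classical direct method of the calculus of variations. First I would fix an extremal $q\in\mathcal{C}^1(I,\R^d)$ and perturb it by an arbitrary variation $\eta\in\mathcal{C}^1(I,\R^d)$ satisfying $\eta(a)=\eta(b)=0$, so that the perturbed curve $q+h\eta$ (with $h\in\R$ a small real parameter) shares the endpoints of $q$. By the definition of an extremal, the real-valued function $h\mapsto\mathcal{L}(q+h\eta)$ must have a critical point at $h=0$; that is, its derivative at $h=0$ vanishes for every such $\eta$.

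Next I would compute this first variation. Since the admissible Lagrangian $L$ is differentiable in $x$ and holomorphic (hence smooth) in $v$, while $q$ and $\eta$ are of class $\mathcal{C}^1$, differentiation under the integral sign is justified and gives
\[
\frac{d}{dh}\mathcal{L}(q+h\eta)\Big|_{h=0}
=\int_a^b\left[\frac{\partial L}{\partial x}\big(t,q(t),\dot q(t)\big)\cdot\eta(t)
+\frac{\partial L}{\partial v}\big(t,q(t),\dot q(t)\big)\cdot\dot\eta(t)\right]dt=0.
\]
Then I would integrate the second term by parts. Because $\eta(a)=\eta(b)=0$, the boundary contribution drops out, leaving
\[
\int_a^b\left[\frac{\partial L}{\partial x}\big(t,q(t),\dot q(t)\big)
-\frac{d}{dt}\frac{\partial L}{\partial v}\big(t,q(t),\dot q(t)\big)\right]\cdot\eta(t)\,dt=0.
\]

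Finally, since this identity holds for every admissible variation $\eta$, I would invoke the fundamental lemma of the calculus of variations to conclude that the bracketed expression must vanish identically on $[a,b]$, which is exactly the stated Euler--Lagrange equation. Conversely, reading the computation backwards shows that any solution of the Euler--Lagrange equation makes the first variation vanish, so the two characterizations coincide. The only delicate step is the integration by parts, which presupposes that the momentum $t\mapsto\frac{\partial L}{\partial v}(t,q(t),\dot q(t))$ is itself differentiable; I expect this to be the main technical obstacle, to be handled either by assuming enough regularity on $L$ or, more cleanly, by using the du~Bois-Reymond form of the fundamental lemma so that differentiability of the momentum is deduced rather than assumed.
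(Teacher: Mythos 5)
Your proposal is correct and follows exactly the standard first-variation argument (perturbation, differentiation under the integral, integration by parts, fundamental lemma, with the du~Bois-Reymond refinement to handle the differentiability of the momentum), which is precisely the classical proof the paper relies on: the paper does not prove this theorem itself but simply cites Arnold \cite{arno}, where this same argument appears. No gap.
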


The nondifferentiable embedding procedure allows us to define a natural
extension of the classical Euler--Lagrange equation in the nondifferentiable context.

\begin{definition}[See \cite{cresson_greff}]
The nondifferentiable Lagrangian functional $\mathcal{L}_\Box$
associated with $\mathcal{L}$ is given by
\begin{equation}
\label{eq:NDL}
\fonction{\mathcal{L}_\Box}{\mathcal{C}^1_\Box(I,\R^d)}{\R}{q}{\int_{a}^{b}
L\left(s,q(s),\frac{\Box q(s)}{\Box t}\right)ds.}
\end{equation}
\end{definition}

Let $H^\beta_0:=\{h\in H^\beta(I,\R^d), h(a)=h(b)=0\}$ and
$q\in H^\alpha(I,\R^d)$ with $\alpha+\beta > 1$. A $H^\beta_0$-variation
of $q$ is a function of the form $q+h$, where $h\in H^\beta_O$.
We denote by $D\mathcal{L}_\Box(q)(h)$ the quantity
\begin{equation}
\lim_{\epsilon \to0 }
\frac{\mathcal{L}_\Box(q+\epsilon h)-\mathcal{L}_\Box(q)}{\epsilon}
\end{equation}
if there exists the so-called Fr\'echet derivative of $\mathcal{L}_\Box$
at point $q$ in direction $h$.

\begin{definition}[Nondifferentiable extremals]
A $H^\beta_0$-extremal curve of the functional $\mathcal{L}_\Box$ is a curve
$q \in H^\alpha(I,\R^d)$ satisfying $D\mathcal{L}_\Box(q)(h)=0$
for any $h\in H^\beta _0$.
\end{definition}

\begin{theorem}[Nondifferentiable Euler--Lagrange equations \cite{cresson_greff}]
Let $0<\alpha,\, \beta<1$ with $\alpha+\beta>1$. Let $L$ be an admissible Lagrangian
of class $\mathcal{C}^2$. We assume that
$\gamma\in H^{\alpha}\left(I,\mathbb{R}^{d}\right)$, such that
$\frac{\Box \gamma}{\Box t}\in H^{\alpha}\left(I, \mathbb{R}^{d}\right)$. Moreover,
we assume that $L\left(t,\gamma(t),\frac{\Box \gamma(t)}{\Box t}\right)h(t)$
satisfies condition \eqref{eq:divcond} for all
$h\in H^{\beta}_0\left(I, \mathbb{R}^{d}\right)$. A curve $\gamma$ satisfying
the nondifferentiable Euler--Lagrange equation
\begin{equation}
\frac{\Box }{\Box t}\left[\frac{\partial L}{\partial v}\left(t,\gamma(t),
\frac{\Box \gamma(t)}{\Box t}\right)\right]
= \frac{\partial L}{\partial x}
\left(t,\gamma(t),\frac{\Box \gamma(t)}{\Box t}\right)
\end{equation}
is an extremal curve of the functional \eqref{eq:NDL}.
\end{theorem}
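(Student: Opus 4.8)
The plan is to compute the Fr\'echet derivative $D\mathcal{L}_\Box(\gamma)(h)$ directly from its definition and to reduce it, by a quantum integration by parts, to an expression in which the Euler--Lagrange operator appears as a factor. First I would form the difference quotient
\[
\frac{\mathcal{L}_\Box(\gamma+\epsilon h)-\mathcal{L}_\Box(\gamma)}{\epsilon}
=\frac{1}{\epsilon}\int_a^b\left[L\left(s,\gamma+\epsilon h,\tfrac{\Box\gamma}{\Box t}+\epsilon\tfrac{\Box h}{\Box t}\right)-L\left(s,\gamma,\tfrac{\Box\gamma}{\Box t}\right)\right]ds,
\]
using the linearity of the quantum derivative to write $\frac{\Box(\gamma+\epsilon h)}{\Box t}=\frac{\Box\gamma}{\Box t}+\epsilon\frac{\Box h}{\Box t}$. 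Since $L$ is of class $\mathcal{C}^2$, a first-order Taylor expansion in $\epsilon$ together with the passage to the limit $\epsilon\to0$ should give
\[
D\mathcal{L}_\Box(\gamma)(h)=\int_a^b\left[\frac{\partial L}{\partial x}\left(s,\gamma,\tfrac{\Box\gamma}{\Box t}\right)\cdot h+\frac{\partial L}{\partial v}\left(s,\gamma,\tfrac{\Box\gamma}{\Box t}\right)\cdot\frac{\Box h}{\Box t}\right]ds.
\]

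The key step is then to remove the derivative $\frac{\Box h}{\Box t}$ from the second term by a quantum integration by parts. Here I would invoke the quantum Leibniz rule of Theorem~\ref{theo:mult}, which applies because $h\in H^\beta$ while $\frac{\partial L}{\partial v}(s,\gamma,\frac{\Box\gamma}{\Box t})\in H^\alpha$ (this uses $\gamma,\frac{\Box\gamma}{\Box t}\in H^\alpha$ together with $L\in\mathcal{C}^2$) and $\alpha+\beta>1$. It gives
\[
\frac{\partial L}{\partial v}\cdot\frac{\Box h}{\Box t}=\frac{\Box}{\Box t}\left(\frac{\partial L}{\partial v}\cdot h\right)-\frac{\Box}{\Box t}\left(\frac{\partial L}{\partial v}\right)\cdot h.
\]
Integrating the total quantum derivative by means of the quantum fundamental theorem of calculus---whose hypothesis \eqref{eq:divcond} is supplied by the standing assumption on $L(t,\gamma,\frac{\Box\gamma}{\Box t})h$---and using that $h(a)=h(b)=0$ since $h\in H^\beta_0$, the boundary contribution $\left[\frac{\partial L}{\partial v}\cdot h\right]_a^b$ vanishes.

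Collecting the terms I would obtain the first-variation formula
\[
D\mathcal{L}_\Box(\gamma)(h)=\int_a^b\left[\frac{\partial L}{\partial x}\left(s,\gamma,\tfrac{\Box\gamma}{\Box t}\right)-\frac{\Box}{\Box t}\left(\frac{\partial L}{\partial v}\left(s,\gamma,\tfrac{\Box\gamma}{\Box t}\right)\right)\right]\cdot h\,ds.
\]
If $\gamma$ satisfies the nondifferentiable Euler--Lagrange equation, the bracket vanishes identically, whence $D\mathcal{L}_\Box(\gamma)(h)=0$ for every $h\in H^\beta_0$, i.e. $\gamma$ is a $H^\beta_0$-extremal of the functional \eqref{eq:NDL}, as claimed.

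I expect the main obstacle to be analytic rather than algebraic: justifying the interchange of the limit $\epsilon\to0$ with the integral and controlling the Taylor remainder, all in the H\"olderian setting where the relevant products need not be classically differentiable. The regularity hypotheses are calibrated precisely for this---the condition $\alpha+\beta>1$ is exactly what places $\frac{\partial L}{\partial v}\cdot h$ in the domain of the quantum Leibniz rule, and condition \eqref{eq:divcond} is exactly what the quantum fundamental theorem of calculus requires so that the $E$-component of the scale derivative does not obstruct integration. The delicate point I would check carefully is that these hypotheses transfer from $L(t,\gamma,\frac{\Box\gamma}{\Box t})h$ to the product $\frac{\partial L}{\partial v}\cdot h$ to which the fundamental theorem is actually applied.
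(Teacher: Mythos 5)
Your proposal is correct, and it is essentially the standard proof: this paper does not actually prove the theorem (it is quoted from \cite{cresson_greff} as background), and your argument---linearity of $\frac{\Box}{\Box t}$ applied to the variation, first-order Taylor expansion of the $\mathcal{C}^2$ Lagrangian to obtain the first-variation formula, the quantum Leibniz rule of Theorem~\ref{theo:mult} (applicable since $\frac{\partial L}{\partial v}\left(\cdot,\gamma,\frac{\Box \gamma}{\Box t}\right)\in H^{\alpha}$, $h\in H^{\beta}$, $\alpha+\beta>1$), and the quantum fundamental theorem of calculus together with $h(a)=h(b)=0$ to annihilate the boundary term---is exactly the route taken in that reference. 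The one delicate point you flag at the end is genuine and you read it correctly: as stated here, condition \eqref{eq:divcond} is imposed on $L\left(t,\gamma(t),\frac{\Box \gamma(t)}{\Box t}\right)h(t)$, whereas the integration by parts needs it for the product $\frac{\partial L}{\partial v}\left(t,\gamma(t),\frac{\Box \gamma(t)}{\Box t}\right)h(t)$; in the original formulation of \cite{cresson_greff} the hypothesis is indeed placed on $\frac{\partial L}{\partial v}\cdot h$, so the phrasing here is a slip and should be read that way, after which your proof closes with no gap.
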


% ------------------------------------------

\section{Reminder about Hamiltonian systems}
\label{sec:reminderhamsys}

We now recall the main concepts and results of both classical
and Cresson's nondifferentiable Hamiltonian systems.

% ------------------------------------------

\subsection{Classical Hamiltonian systems}

Let $L$ be an admissible Lagrangian function. If $L$ satisfies
the so-called Legendre property, then we can associate to $L$
a Hamiltonian function denoted by $H$.

\begin{definition}
Let $L$ be an admissible Lagrangian function. The Lagrangian $L$ is said
to satisfy the Legendre property if the mapping
$v\mapsto \frac{\partial L}{\partial v}(t,x,v)$ is invertible for any
$(t, q, v)\in I\times \R^d \times \C^d$.
\end{definition}

If we introduce a new variable
\begin{equation}
p=\frac{\partial L}{\partial v}(t,q,v)
\end{equation}
and $L$ satisfies the Legendre property,
then we can find a function $f$ such that
\begin{equation}
v=f(t,q,p).
\end{equation}
Using this notation, we have the following definition.

\begin{definition}
Let $L$ be an admissible Lagrangian function satisfying the Legendre property.
The Hamiltonian function $H$ associated with $L$ is given by
\begin{equation}
\label{eq:hamfun}
\fonction{H}{\R\times\R^d\times\C^d}{\C}{(t,q,p)}{H(t,q,p)
=pf(t,q,p)-L(t,q,f(t,q,p)).}
\end{equation}
\end{definition}

We have the following theorem (see, e.g., \cite{arno}).

\begin{theorem}[Hamilton's least-action principle]
The curve $(q,p)\in\mathcal{C}(I,\R^d)\times \mathcal{C}(I,\C^d)$
is an extremal of the Hamiltonian functional
\begin{equation}
\mathcal{H}(q,p)=\int_{a}^{b}p(t)\dot{q}(t)-H(t,q(t),p(t))dt
\end{equation}
if and only if it satisfies the Hamiltonian system associated with $H$ given by
\begin{equation}
\left\{
\begin{aligned}
\dot{q}(t)&=\frac{\partial H(t,q(t),p(t))}{\partial p} \\
\dot{p}(t)&=-\frac{\partial H(t,q(t),p(t))}{\partial q}
\end{aligned}
\right.
\label{def_hamilton}
\end{equation}
called the Hamiltonian equations.
\end{theorem}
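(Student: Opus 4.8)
The plan is to treat $\mathcal{H}$ as a functional of the two \emph{independent} curves $q$ and $p$ and to compute its first variation directly, exactly as in the classical calculus of variations. Since the endpoints of $q$ are fixed, I would consider variations $q\mapsto q+\epsilon h$ with $h\in\mathcal{C}(I,\R^d)$ and $h(a)=h(b)=0$, together with free variations $p\mapsto p+\epsilon k$, $k\in\mathcal{C}(I,\C^d)$, imposing no endpoint constraint on $k$ (there is no boundary condition on the momentum). Substituting into $\mathcal{H}$ and differentiating under the integral sign at $\epsilon=0$ gives
\begin{equation}
D\mathcal{H}(q,p)(h,k)=\int_a^b\left[k\,\dot{q}+p\,\dot{h}-\frac{\partial H}{\partial q}\,h-\frac{\partial H}{\partial p}\,k\right]dt,
\end{equation}
where all partial derivatives of $H$ are understood to be evaluated at $(t,q(t),p(t))$.

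Next I would integrate by parts the single term carrying a derivative of a test function, namely $\int_a^b p\,\dot{h}\,dt=[p\,h]_a^b-\int_a^b\dot{p}\,h\,dt$. The boundary term vanishes because $h(a)=h(b)=0$, so after collecting the coefficients of $h$ and $k$ the first variation takes the form
\begin{equation}
D\mathcal{H}(q,p)(h,k)=\int_a^b\left[\left(\dot{q}-\frac{\partial H}{\partial p}\right)k+\left(-\dot{p}-\frac{\partial H}{\partial q}\right)h\right]dt.
\end{equation}

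To conclude, I would exploit the independence of the two families of variations. Choosing $k=0$ and letting $h$ range over all admissible variations forces, by the fundamental lemma of the calculus of variations, the coefficient of $h$ to vanish identically, yielding $\dot{p}=-\partial H/\partial q$; symmetrically, choosing $h=0$ and varying $k$ freely gives $\dot{q}=\partial H/\partial p$. Together these are precisely the Hamiltonian equations \eqref{def_hamilton}. The converse is immediate: if both equations hold, the integrand above is identically zero, so $D\mathcal{H}(q,p)(h,k)=0$ for every admissible $(h,k)$ and $(q,p)$ is an extremal. This establishes the claimed equivalence.

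The computation is essentially routine, so the only points demanding care---and the place where the genuine work sits---are the analytic justifications: differentiating under the integral sign and the integration by parts require sufficient regularity of $H$ and of the curves, while the separate application of the fundamental lemma relies on the true independence of the variations $h$ and $k$, in particular on the momentum $p$ being unconstrained at the endpoints. Once these are in place, isolating each Hamiltonian equation by switching off one variation at a time is straightforward.
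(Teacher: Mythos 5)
Your argument is correct and is precisely the standard proof: the paper itself states this classical result without proof, simply citing Arnold \cite{arno}, and your first-variation computation with independent variations $h$ (vanishing at the endpoints) and $k$ (free), one integration by parts on $p\,\dot h$, and the fundamental lemma applied separately to each family is exactly the canonical argument found there. The only caveat, which you already flag, is regularity: despite the statement's notation $\mathcal{C}(I,\R^d)\times\mathcal{C}(I,\C^d)$, one needs $q,p\in\mathcal{C}^1$ for $\dot q$ and the integration by parts on $p\,\dot h$ to make sense (or, alternatively, a du Bois--Reymond-type argument to deduce differentiability of $p$), and the fundamental lemma should be applied to the real and imaginary parts of the coefficient of $k$, since $p$ and $H$ are complex-valued.
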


A vectorial notation is obtained for the Hamiltonian equations in posing
$z=(q,p)^\mathsf{T}$ and $\nabla H = (\frac{\partial H}{\partial q},
\frac{\partial H}{\partial p} )^\mathsf{T}$, where $\mathsf{T}$ denotes
the transposition. The Hamiltonian equations are then written as
\begin{equation}
\frac{dz(t)}{dt} = J \cdot \nabla H(t,z(t)),
\end{equation}
where 
\begin{equation}
\label{eq:J}
J = \begin{pmatrix} 0 & I_d \\ -I_d & 0 \end{pmatrix}
\end{equation}
denotes the symplectic matrix with $I_d$ being the identity matrix on $\R^d$.

% ------------------------------------------

\subsection{Nondifferentiable Hamiltonian systems}

The nondifferentiable embedding induces a change in the phase space
with respect to the classical case. As a consequence, we have to work
with variables $(x, p)$ that belong to $\R^d\times \C^d$ and not only
to $\R^d\times \R^d$, as usual.

\begin{definition}[Nondifferentiable embedding
of Hamiltonian systems \cite{cresson_greff}]
The nondifferentiable embedded Hamiltonian
system \eqref{def_hamilton} is given by
\begin{equation}
\left\{
\begin{aligned}
\frac{\Box q(t)}{\Box t}&=\frac{\partial H(t,q(t),p(t))}{\partial p} \\
\frac{\Box p(t)}{\Box t}&=-\frac{\partial H(t,q(t),p(t))}{\partial q}
\end{aligned}
\right.
\label{def_NDhamilton}
\end{equation}
and the embedded Hamiltonian functional $\mathcal{H}_\Box$
is defined on $H^\alpha(I,\R^d)\times H^\alpha(I,\C^d)$ by
\begin{equation}
\label{eq:emb_H:functional}
\mathcal{H}_\Box(q,p)
=\int_{a}^{b}\left(p(t)\frac{\Box q(t)}{\Box t}-H(t,q(t),p(t))\right) dt.
\end{equation}
\end{definition}
The nondifferentiable calculus of variations allows us to derive
the extremals for $\mathcal{H}_\Box$.

\begin{theorem}[Nondifferentiable
Hamilton's least-action principle \cite{cresson_greff}]
Let $0<\alpha,\, \beta<1$ with $\alpha+\beta>1$.
Let $L$ be an admissible $\mathcal{C}^2$-Lagrangian. We assume that
$\gamma\in H^{\alpha}\left(I,\mathbb{R}^{d}\right)$, such that
$\frac{\Box \gamma}{\Box t}\in H^{\alpha}\left(I, \mathbb{R}^{d}\right)$.
Moreover, we assume that $L\left(t,\gamma(t),\frac{\Box \gamma(t)}{\Box t}\right)h(t)$
satisfies condition \eqref{eq:divcond} for all
$h\in H^{\beta}_0\left(I, \mathbb{R}^{d}\right)$. Let $H$ be the corresponding
Hamiltonian defined by \eqref{eq:hamfun}. A curve $\gamma\mapsto(t,q(t),p(t))
\in I\times \R^d\times\C^d$ solution of the nondifferentiable Hamiltonian
system \eqref{def_NDhamilton} is an extremal of the functional
\eqref{eq:emb_H:functional} over the space of variations
$V=H^\beta_0(I,\R^d)\times H^\beta_0(I,\C^d)$.
\end{theorem}

% ------------------------------------------

\section{Nondifferentiable Helmholtz problem}
\label{sec:nondiffhelmholtz}

In this section, we solve the inverse problem of the nondifferentiable
calculus of variations in the Hamiltonian case. We first recall the usual way
to derive the Helmholtz conditions following the presentation made
by Santilli \cite{santilli}. Two main derivations are available:
\begin{itemize}
\item[(i)] the first is related to the characterization of Hamiltonian systems
via the \emph{symplectic two-differential form} and the fact that
by duality the associated one-differential form to a Hamiltonian vector field
is closed---the so called \emph{integrability conditions};

\item[(ii)] the second uses the characterization of Hamiltonian systems
via the self-adjointness of the Fr\'echet derivative of the differential
operator associated with the equation---the so called \emph{Helmholtz conditions}.
\end{itemize}
Of course, we have coincidence of the two procedures in the classical case.
As there is no analogous of differential form in the framework of Cresson's
quantum calculus, we follow the second way to obtain the nondifferentiable
analogue of the Helmholtz conditions. For simplicity, we consider
a time-independent Hamiltonian. The time-dependent case can be done in the same way.

% ------------------------------------------

\subsection{Hemlholtz conditions for classical Hamiltonian systems}

In this section we work on $\R^{2d}$, $d \ge 1$, $d \in \N$.

\subsubsection{Symplectic scalar product}

The \emph{symplectic scalar product} $\langle \cdot,\cdot \rangle_J$ is defined by
\begin{equation}
\langle X,Y\rangle_J = \langle X,J\cdot Y\rangle
\end{equation}
for all $X,Y \in \R^{2d}$, where $\langle \cdot,\cdot \rangle$ denotes
the usual scalar product and $J$ is the symplectic matrix \eqref{eq:J}. 
We also consider the $L^2$ symplectic scalar product
induced by $\langle \cdot,\cdot \rangle_J$ defined for
$f,g \in \mathcal{C}^0([a,b],\R^{2d})$ by
\begin{equation}
\langle f,g \rangle_{L^2,J}=\displaystyle\int_{a}^{b} \langle f(t),g(t) \rangle_Jdt \ .
\end{equation}

% ------------------------------------------

\subsubsection{Adjoint of a differential operator}

In the following, we consider first-order differential equations of the form
\begin{equation}
\label{eq:diffsys}
\frac{d}{dt}\begin{pmatrix}q \\
p
\end{pmatrix}
= \begin{pmatrix}
X_q(q,p) \\
X_p(q,p)
\end{pmatrix},
\end{equation}
where the vector fields $X_q$ and $X_p$ 
are $\mathcal{C}^1$ with respect to $q$ and $p$.
The associated differential operator is written as
\begin{equation}
O_X(q,p) =
\begin{pmatrix}
\dot{q} - X_q(q,p) \\
\dot{p} - X_p(q,p)
\end{pmatrix}.
\label{eq:operatorO}
\end{equation}
A \emph{natural} notion of adjoint for a differential operator
is then defined as follows.

\begin{definition}
Let $\fonctionsansdef{A}{\mathcal{C}^1([a,b],
\R^{2d})}{\mathcal{C}^1([a,b],\R^{2d})}$.
We define the adjoint $A^*_J$ of $A$ with respect
to $\langle\cdot,\cdot\rangle_{L^2,J}$ by
\begin{equation}
\langle A \cdot f, g\rangle_{L^2,J}
= \langle A^{*}_J \cdot g, f\rangle_{L^2,J} \ .
\end{equation}
\end{definition}

An operator $A$ will be called \emph{self-adjoint} if $A=A^{*}_J$
with respect to the $L^2$ symplectic scalar product.

% ------------------------------------------

\subsubsection{Hamiltonian Helmholtz conditions}

The Helmholtz conditions in the Hamiltonian case
are given by the following result
(see Theorem~3.12.1, p.~176--177 in \cite{santilli}).

\begin{theorem}[Hamiltonian Helmholtz theorem]
Let $X(q,p)$ be a vector field defined by $X(q,p)^\mathsf{T}= (X_q(q,p) , X_p(q,p) )$.
The differential equation \eqref{eq:diffsys} is Hamiltonian
if and only if the associated differential operator $O_X$ given
by \eqref{eq:operatorO} has a  self-adjoint Fr\'echet derivative with respect
to the $L^2$ symplectic scalar product. In this case the Hamiltonian is given by
\begin{equation}
H(q,p)=\int_{0}^{1}\left[p \cdot X_q(\lambda q, \lambda p)
- q\cdot X_p(\lambda q, \lambda p) \right]d\lambda.
\end{equation}
\end{theorem}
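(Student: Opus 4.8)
The plan is to convert the operator-theoretic self-adjointness statement into a pointwise algebraic symmetry condition on the Jacobian of $X$, and then to match that condition with the Poincar\'e-lemma characterization of gradient (hence Hamiltonian) vector fields. First I would compute the Fr\'echet derivative of the operator $O_X$ in \eqref{eq:operatorO}. Writing $z=(q,p)^{\mathsf T}$ and letting $h$ be a variation, differentiating $z\mapsto \dot z - X(z)$ gives the linear operator
\begin{equation}
A := DO_X(z) = \frac{d}{dt} - DX(z),
\end{equation}
where $DX(z)$ is the $2d\times 2d$ Jacobian matrix of $X$ evaluated along the curve $z(t)$. The whole problem then reduces to deciding for which $X$ this $A$ is self-adjoint with respect to $\langle\cdot,\cdot\rangle_{L^2,J}$.

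Next I would compute $A^{*}_J$ directly from the defining relation $\langle Af,g\rangle_{L^2,J}=\langle A^{*}_J g,f\rangle_{L^2,J}$, splitting $A$ into its first-order and zeroth-order parts. For the derivative part, integration by parts on $[a,b]$ combined with the antisymmetry $J^{\mathsf T}=-J$ shows that $d/dt$ is in fact \emph{self-adjoint} for the symplectic product (the sign produced by integration by parts is cancelled by the sign coming from $J^{\mathsf T}=-J$), once the boundary term $[\langle f,Jg\rangle]_a^b$ is discarded, which is legitimate for the variations used here. For the multiplication part, a purely algebraic manipulation using $J^{\mathsf T}=-J$ and $J^2=-I_{2d}$ yields $A^{*}_J=\frac{d}{dt}-J\,DX(z)^{\mathsf T}J$. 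Imposing $A=A^{*}_J$ therefore collapses to the pointwise condition $DX(z)=J\,DX(z)^{\mathsf T}J$, which is equivalent to saying that the matrix $J\,DX(z)$ is symmetric. This algebraic reformulation is the technical heart of the argument, and the main care needed is the sign bookkeeping arising from the fact that both $d/dt$ and $J$ are antisymmetric, so their combination produces a symmetric object.

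It then remains to identify symmetry of $J\,DX$ with the Hamiltonian property, and to extract the explicit $H$. For the direct implication, if \eqref{eq:diffsys} is Hamiltonian then $X=J\nabla H$, so $DX=J\,\mathrm{Hess}(H)$ and hence $J\,DX=J^2\,\mathrm{Hess}(H)=-\mathrm{Hess}(H)$, which is symmetric; thus $A$ is self-adjoint. For the converse, I set $Y:=-JX$, so that $DY=-J\,DX$; symmetry of $J\,DX$ is exactly symmetry of $DY$, and by the Poincar\'e lemma $Y$ is a gradient, $Y=\nabla H$, whence $X=J\nabla H$ and the system is Hamiltonian. Finally, the potential is recovered by the standard homotopy formula $H(z)=\int_0^1\langle Y(\lambda z),z\rangle\,d\lambda$; substituting $Y=-JX=(-X_p,X_q)^{\mathsf T}$ and $z=(q,p)$ reproduces precisely $H(q,p)=\int_0^1\left[\,p\cdot X_q(\lambda q,\lambda p)-q\cdot X_p(\lambda q,\lambda p)\,\right]d\lambda$. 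The one genuine obstacle is this converse direction: it relies on the Poincar\'e lemma, so one must assume the domain is star-shaped (or at least simply connected) for the line integral to define a single-valued $H$; checking $\nabla H=Y$ is then a routine differentiation under the integral sign, using the symmetry of $DY$ together with Euler's identity to recognize the integrand as $\frac{d}{d\lambda}\bigl[\lambda\,Y(\lambda z)\bigr]$.
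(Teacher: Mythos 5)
Your proof is correct and follows essentially the same route as the paper: the paper states this classical theorem with a citation to Santilli and proves the nondifferentiable analogue by exactly your scheme---computing the adjoint of the Fr\'echet derivative via integration by parts (your pointwise condition that $J\,DX(z)$ be symmetric is precisely the compact matrix form of the integrability conditions \eqref{HC1}--\eqref{HC2}), obtaining the direct implication from Schwarz's lemma, and establishing the converse with the same homotopy formula for $H$. The only cosmetic difference is that you invoke the Poincar\'e lemma abstractly for $Y=-JX$, whereas the paper differentiates the explicit $H$ under the integral sign and recognizes the integrand as $\frac{\partial}{\partial\lambda}\bigl[\lambda X(\lambda q,\lambda p)\bigr]$---the same computation, with your star-shapedness caveat automatically satisfied since everything is posed on all of $\R^{2d}$.
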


The conditions for the self-adjointness of the differential operator can be
made \emph{explicit}. They coincide with the \emph{integrability conditions}
characterizing the exactness of the one-form associated with the vector field
by duality (see \cite{santilli}, Theorem~2.7.3, p.~88).

\begin{theorem}[Integrability conditions]
Let $X(q,p)^\mathsf{T}= (X_q(q,p) , X_p(q,p) )$ be a vector field.
The differential operator $O_X$ given by \eqref{eq:operatorO} has
a self-adjoint Fr\'echet derivative with respect to the $L^2$
symplectic scalar product if and only if
\begin{equation}
\frac{\partial X_q}{\partial q} + \left(\frac{\partial X_p}{\partial p}
\right)^\mathsf{T} = 0, \quad \frac{\partial X_q}{\partial p}
\ \text{and} \ \frac{\partial X_p}{\partial q} \ \text{are symmetric}.
\end{equation}
\end{theorem}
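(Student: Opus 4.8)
The plan is to compute the Fréchet derivative of $O_X$ explicitly, determine its symplectic $L^2$-adjoint by an integration by parts, and then read off the conditions equivalent to self-adjointness. Writing $z = (q,p)^\mathsf{T}$ and $X(z) = (X_q,X_p)^\mathsf{T}$, the operator reads $O_X(z) = \dot z - X(z)$, so its Fréchet derivative at $z$ acts on a variation $v$ by $DO_X(z)\cdot v = \dot v - DX(z)\cdot v$, where $DX$ is the Jacobian matrix of $X$, that is, the $2d\times 2d$ block matrix with blocks $\frac{\partial X_q}{\partial q}$, $\frac{\partial X_q}{\partial p}$, $\frac{\partial X_p}{\partial q}$, $\frac{\partial X_p}{\partial p}$.

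First I would compute the adjoint of $DO_X$ with respect to $\langle\cdot,\cdot\rangle_{L^2,J}$. Starting from $\langle DO_X\cdot f, g\rangle_{L^2,J} = \int_{a}^{b} \langle \dot f - DX\,f,\, Jg\rangle\,dt$, I would treat the two terms separately. For the derivative term, an integration by parts moves the time derivative onto $g$; using that $J$ is constant and antisymmetric ($J^\mathsf{T} = -J$, $J^2 = -I_{2d}$) and discarding the boundary contribution, one finds $\int_{a}^{b} \langle \dot f, Jg\rangle\,dt = \int_{a}^{b} \langle \dot g, Jf\rangle\,dt$, so the first-order part is formally self-adjoint. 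For the algebraic term, transposing the bilinear form $\langle DX\,f, Jg\rangle$ and using $J^{-1} = -J$ identifies the zeroth-order part of the adjoint as multiplication by $-J(DX)^\mathsf{T}J$. Altogether I obtain $(DO_X)^{*}_J\cdot v = \dot v - J(DX)^\mathsf{T}J\,v$.

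Comparing the two operators, the derivative terms cancel, so $DO_X$ is self-adjoint precisely when the matrix identity $DX = J(DX)^\mathsf{T}J$ holds pointwise. The last step is to expand this identity in the natural $d\times d$ block decomposition of $DX$ and of $J$. Computing $J(DX)^\mathsf{T}J$ blockwise and matching the four blocks against those of $DX$ yields $\frac{\partial X_q}{\partial q} = -\left(\frac{\partial X_p}{\partial p}\right)^\mathsf{T}$ (which appears twice, from the two diagonal blocks) together with the symmetry of $\frac{\partial X_q}{\partial p}$ and of $\frac{\partial X_p}{\partial q}$ (from the two off-diagonal blocks); these are exactly the stated integrability conditions, and each implication is reversible, giving the ``if and only if''.

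I expect the only delicate point to be the bookkeeping in the adjoint computation: one must carry the symplectic weight $J$ rather than the plain $L^2$ product, keep track of the antisymmetry of $J$ through every transposition, and justify that the boundary terms produced by the integration by parts vanish on the relevant space of variations. Once the adjoint is correctly identified as $\dot v - J(DX)^\mathsf{T}J\,v$, the remaining block algebra is a short and routine verification.
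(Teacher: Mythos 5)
Your proof is correct, and it reaches the stated conditions by a sound computation: the adjoint you derive, $(DO_X)^{*}_J\,v=\dot v - J\,(DX)^\mathsf{T}J\,v$, is what the paper's definition of $A^{*}_J$ gives, and expanding $DX = J\,(DX)^\mathsf{T}J$ blockwise does produce \eqref{HC1}--\eqref{HC2}, with the diagonal blocks giving the trace-type condition twice and the off-diagonal blocks giving the two symmetry conditions. It is worth noting, however, how this sits relative to the paper: for the classical statement you were asked about, the paper offers no proof at all---it is quoted from Santilli (Theorem~2.7.3, p.~88)---and the proof the paper actually writes out is for the nondifferentiable analogue, in the Proposition computing $DO^{*}_{\Box,X}$. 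There the same integration-by-parts argument is carried out \emph{componentwise}: the symplectic pairing is expanded into the six scalar integrals in $(u,v)$ and $(w,x)$, each derivative term is moved across using the quantum Leibniz rule together with the quantum fundamental theorem of calculus (under the extra hypothesis \eqref{eq:divcond}, which replaces your ``discard the boundary contribution'' step), and the adjoint is then read off block by block. Your matrix packaging via conjugation by $J$ buys a shorter and more transparent endgame---the four block equations appear simultaneously, and the redundancy of the second diagonal condition is visible at a glance---whereas the paper's componentwise route generalizes verbatim to the quantum setting, where one must track exactly which products $u\cdot h$, $v\cdot h$ satisfy \eqref{eq:divcond}. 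One subtlety you handled correctly but should make explicit: the paper defines the adjoint with the arguments swapped, $\langle A\cdot f, g\rangle_{L^2,J}=\langle A^{*}_J\cdot g, f\rangle_{L^2,J}$, and since the symplectic pairing is antisymmetric this ordering is not cosmetic; your sign bookkeeping (using $J^\mathsf{T}=-J$ and $J^{-1}=-J$) is consistent with that convention, which is why the first-order term comes out formally self-adjoint rather than anti-self-adjoint.
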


% ------------------------------------------

\subsection{Hemlholtz conditions for nondifferentiable Hamiltonian systems}

The previous scalar products extend naturally to complex valued functions.
Let $0<\alpha<1$ and let $(q,p)\in H^{\alpha}\left(I,\mathbb{R}^{d}\right)
\times H^{\alpha}\left(I,\mathbb{C}^{d}\right)$, such that $\frac{\Box q}{\Box t}
\in H^{\alpha}\left(I, \mathbb{C}^{d}\right)$ and $\frac{\Box p}{\Box t}
\in H^{\alpha}\left(I, \mathbb{C}^{d}\right)$. We consider first-order
nondifferential equations of the form
\begin{equation}
\label{eq:NDdiffsys}
\frac{\Box}{\Box t} \left(
\begin{aligned}
q \\
p
\end{aligned}
\right)=\left(
\begin{aligned}
X_q(q,p) \\ X_p(q,p)
\end{aligned}
\right).
\end{equation}
The associated quantum differential operator is written as
\begin{equation}
O_{\Box,X}(q,p) = \left(
\begin{aligned}
\frac{\Box q}{\Box t} - X_q(q,p) \\ \frac{\Box p}{\Box t} - X_p(q,p)
\end{aligned}
\right).
\label{eq:NDoperatorO}
\end{equation}
A \emph{natural} notion of adjoint for a quantum differential operator
is then defined.

\begin{definition}
Let $\fonctionsansdef{A}{\mathcal{C}^1_\Box([a,b],
\C^{2d})}{\mathcal{C}^1_\Box([a,b],\C^{2d})}$. We define the
adjoint $A^*_J$ of $A$ with respect to $\langle\cdot,\cdot\rangle_{L^2,J}$ by
\begin{equation}
\langle A \cdot f, g\rangle_{L^2,J}
= \langle A^{*}_J \cdot g, f\rangle_{L^2,J} \ .
\end{equation}
\end{definition}

An operator $A$ will be called \emph{self-adjoint} if $A=A^{*}_J$
with respect to the $L^2$ symplectic scalar product. We can now
obtain the adjoint operator associated with $O_{\Box,X}$.

\begin{proposition}
Let $\beta$ be such that $\alpha+\beta>1$.
Let $(u,v)\in H^{\beta}_0\left(I,\mathbb{R}^{d}\right)
\times H^{\beta}_0\left(I,\mathbb{C}^{d}\right)$, such that
$\frac{\Box u}{\Box t}\in H^{\alpha}\left(I, \mathbb{C}^{d}\right)$
and $\frac{\Box v}{\Box t}\in H^{\alpha}\left(I, \mathbb{C}^{d}\right)$.
The Fr\'echet derivative $DO_{\Box,X}$ of \eqref{eq:NDoperatorO}
at $(q,p)$ along $(u,v)$ is then given by
\begin{equation}
DO_{\Box,X}(q,p)(u,v)
= \left(
\begin{aligned}
&\frac{\Box u}{\Box t}
-\frac{\partial X_q }{\partial q} \cdot u
-\frac{\partial X_q}{\partial p}\cdot v \\
&\frac{\Box v}{\Box t} -\frac{\partial X_p}{\partial q}
\cdot u -\frac{\partial X_p}{\partial p}\cdot v
\end{aligned}\right).
\end{equation}
Assume that $u\cdot h$ and $v\cdot h$ satisfy condition \eqref{eq:divcond}
for any $h\in H^\beta_0(I,\C^d)$. In consequence, the adjoint
$DO^*_{\Box,X}$ of $DO_{\Box,X}(q,p)$ with respect to the
$L^2$ symplectic scalar product is given by
\begin{equation}
DO^*_{\Box,X}(q,p)(u,v)
= \left(
\begin{aligned}
\frac{\Box u}{\Box t} +\left(\frac{\partial X_p }{\partial p}\right)^\mathsf{T}
\cdot u -\left(\frac{\partial X_q }{\partial p}\right)^\mathsf{T} \cdot v \\
\frac{\Box v}{\Box t} -\left(\frac{\partial X_p }{\partial q}\right)^\mathsf{T}
\cdot u +\left(\frac{\partial X_q }{\partial q}\right)^\mathsf{T} \cdot v
\end{aligned}
\right).
\end{equation}
\end{proposition}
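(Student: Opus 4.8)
The plan is to establish the two formulas separately: first the Fr\'echet derivative, then its symplectic adjoint.

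For the Fr\'echet derivative, I would linearize $O_{\Box,X}$ directly from its definition,
\[
DO_{\Box,X}(q,p)(u,v) = \lim_{\eta\to 0}\frac{O_{\Box,X}(q+\eta u,\, p+\eta v)-O_{\Box,X}(q,p)}{\eta},
\]
using two facts. The quantum derivative $\frac{\Box}{\Box t}$ is linear, so it contributes $\frac{\Box u}{\Box t}$ and $\frac{\Box v}{\Box t}$ to the two components; and the vector fields $X_q,X_p$, being $\mathcal{C}^1$ in $(q,p)$, admit first-order expansions whose linear parts are $\frac{\partial X_q}{\partial q}\cdot u + \frac{\partial X_q}{\partial p}\cdot v$ and $\frac{\partial X_p}{\partial q}\cdot u + \frac{\partial X_p}{\partial p}\cdot v$. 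Assembling these yields the first displayed formula with no further work.

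For the adjoint, the strategy is to compute $\langle DO_{\Box,X}(q,p)(u,v), g\rangle_{L^2,J}$ against a test function $g=(g_1,g_2)$ and rearrange it into the form $\langle\,\cdot\,,(u,v)\rangle_{L^2,J}$, reading off the operator applied to $g$. First I would make the symplectic pairing explicit: since $J\cdot g = (g_2,-g_1)^\mathsf{T}$, one has $\langle f, g\rangle_{L^2,J} = \int_a^b (f_1 g_2 - f_2 g_1)\,dt$. Substituting the two components of $DO_{\Box,X}(q,p)(u,v)$ splits the integral into quantum-derivative terms and Jacobian terms. The quantum-derivative terms $\int_a^b\!\big(\frac{\Box u}{\Box t}g_2 - \frac{\Box v}{\Box t}g_1\big)\,dt$ are treated by quantum integration by parts: applying the quantum Leibniz rule (Theorem~\ref{theo:mult}) to $u\cdot g_2$ and $v\cdot g_1$ and then the quantum fundamental theorem of calculus transfers the derivative onto $g_1,g_2$, producing $-\int_a^b u\cdot\frac{\Box g_2}{\Box t}\,dt$ and $-\int_a^b v\cdot\frac{\Box g_1}{\Box t}\,dt$ together with boundary terms $[u\cdot g_2]_a^b$ and $[v\cdot g_1]_a^b$. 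The Jacobian terms are handled by matrix transposition, $(M\cdot u)\cdot g = u\cdot(M^\mathsf{T}\cdot g)$, which is where the transposed partials in the target formula originate. Collecting the coefficients of $u$ and of $v$ and comparing with $\langle (h_1,h_2),(u,v)\rangle_{L^2,J} = \int_a^b(h_1 v - h_2 u)\,dt$ then identifies the two components $h_1,h_2$ of the adjoint.

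The main obstacle---really the only substantive point---is justifying that the boundary terms vanish and that integration by parts is legitimate in this nondifferentiable setting. This is exactly where the hypotheses enter: $u,v\in H^\beta_0$ guarantees $u(a)=u(b)=v(a)=v(b)=0$, so the boundary contributions $[u\cdot g_2]_a^b$ and $[v\cdot g_1]_a^b$ vanish, while the assumption that $u\cdot h$ and $v\cdot h$ satisfy condition \eqref{eq:divcond} is precisely what licenses the quantum fundamental theorem of calculus in each integration by parts. The remaining care is purely bookkeeping of the signs introduced by the $-I_d$ block of $J$, which must be tracked consistently on both sides of the pairing so that the coefficient of $v$ yields $h_1$ and the coefficient of $u$ yields $-h_2$; a consistent sign convention then reproduces the stated adjoint formula exactly.
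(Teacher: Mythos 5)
Your proposal is correct and follows essentially the same route as the paper: the paper likewise treats the Fr\'echet derivative as a direct linearization, then expands the symplectic pairing against a test function, integrates by parts via the quantum Leibniz rule and the quantum fundamental theorem of calculus (licensed precisely by condition \eqref{eq:divcond} and the vanishing endpoint values from $H^\beta_0$), transposes the Jacobian terms, and reads off the adjoint from the definition. Your write-up is in fact slightly more careful than the paper's, which leaves the Fr\'echet derivative as ``a simple computation'' and contains typographical slips in its integration-by-parts display that your sign bookkeeping correctly resolves.
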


\begin{proof}
The expression for the Fr\'echet derivative of \eqref{eq:NDoperatorO} at $(q,p)$
along $(u,v)$ is a simple computation. Let $(w,x)\in H^{\beta}_0\left(I,
\mathbb{R}^{d}\right)\times H^{\beta}_0\left(I,\mathbb{C}^{d}\right)$ be
such that $\frac{\Box w}{\Box t}\in H^{\alpha}\left(I, \mathbb{C}^{d}\right)$
and $\frac{\Box x}{\Box t}\in H^{\alpha}\left(I, \mathbb{C}^{d}\right)$.
By definition, we have
\begin{multline}
\langle DO_{\Box,X}(q,p)(u,v), (w,x)\rangle_{L^2,J}
=\int_{a}^{b} \left[\frac{\Box u}{\Box t}\cdot x
-\left(\frac{\partial X_q}{\partial q} \cdot u\right)\cdot x
-\left(\frac{\partial X_q }{\partial p}\cdot v\right)\cdot x\right. \\
\left. -\frac{\Box v}{\Box t}\cdot w +\left(\frac{\partial X_p}{\partial q}
\cdot u\right)\cdot w 
+\left(\frac{\partial X_p}{\partial p}\cdot v\right)\cdot w\right] dt.
\end{multline}
As $u\cdot h$ and $v\cdot h$ satisfy condition \eqref{eq:divcond} for any
$h\in H^\beta_0(I,\C^d)$, using the quantum Leibniz rule and the quantum version
of the fundamental theorem of calculus, we obtain
\begin{equation}
\begin{aligned}
&\int_{a}^{b}\frac{\Box u}{\Box t}\cdot b \ dt
= \int_{a}^{b}-u\cdot \frac{\Box b}{\Box t} \ dt, \\
&\int_{a}^{b}\frac{\Box v}{\Box t}\cdot a \ dt
= \int_{a}^{b}-u\cdot \frac{\Box b}{\Box t} \ dt.
\end{aligned}
\end{equation}
Then,
\begin{multline}
\langle DO_{\Box,X}(q,p)(u,v), (w,x)\rangle_{L^2,J}
=\int_{a}^{b} \left[-u\cdot \left(\frac{\Box x}{\Box t}
-\left(\frac{\partial X_p}{\partial q}\right)^\mathsf{T}\cdot w
+ \left(\frac{\partial X_q}{\partial q}\right)^\mathsf{T}\cdot x\right)\right.\\
\left. +v\cdot \left(\frac{\Box w}{\Box t}
+\left(\frac{\partial X_p}{\partial p}\right)^\mathsf{T}\cdot w
- \left(\frac{\partial X_q}{\partial p}\right)^\mathsf{T}\cdot x\right)\right] dt.
\end{multline}
By definition, we obtain the expression of the adjoint $DO^*_{\Box,X}$ of
$DO_{\Box,X}(q,p)$ with respect to the $L^2$ symplectic scalar product.
\end{proof}

In consequence, from a direct identification, we obtain the nondifferentiable
self-adjointess conditions called \emph{Helmholtz's conditions}. As in the
classical case, we call these conditions
\emph{nondifferentiable integrability conditions}.

\begin{theorem}[Nondifferentiable integrability conditions]
Let $X(q,p)^\mathsf{T}= (X_q(q,p) , X_p(q,p) )$ be a vector field. The
differential operator $O_{\Box,X}$ given by \eqref{eq:NDoperatorO} has
a self-adjoint Fr\'echet derivative with respect to the symplectic
scalar product if and only if
\begin{align}
&\frac{\partial X_q}{\partial q} + \left(\frac{\partial X_p}{\partial p}
\right)^\mathsf{T} = 0, \label{HC1} \tag{HC1} \\
&\frac{\partial X_q}{\partial p}
\ \text{and} \ \frac{\partial X_p}{\partial q}
\ \text{are symmetric} \label{HC2} \tag{HC2}.
\end{align}
\end{theorem}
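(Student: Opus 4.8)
The plan is to read off the Helmholtz conditions directly from the explicit adjoint formula established in the preceding Proposition. The operator $DO_{\Box,X}(q,p)$ is self-adjoint precisely when it coincides with its adjoint $DO^*_{\Box,X}(q,p)$ as an operator acting on all admissible directions $(u,v)$. Since both expressions have the same quantum-derivative terms $\frac{\Box u}{\Box t}$ and $\frac{\Box v}{\Box t}$ in the first and second slots respectively, these cancel, and what remains is an identity between two linear maps applied to $(u,v)$, with matrix coefficients built from the partial derivatives of $X_q$ and $X_p$.

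First I would write out the two operators side by side. The Fr\'echet derivative has first component $\frac{\Box u}{\Box t} - \frac{\partial X_q}{\partial q}\cdot u - \frac{\partial X_q}{\partial p}\cdot v$ and second component $\frac{\Box v}{\Box t} - \frac{\partial X_p}{\partial q}\cdot u - \frac{\partial X_p}{\partial p}\cdot v$, while the adjoint has first component $\frac{\Box u}{\Box t} + \bigl(\frac{\partial X_p}{\partial p}\bigr)^{\mathsf{T}}\cdot u - \bigl(\frac{\partial X_q}{\partial p}\bigr)^{\mathsf{T}}\cdot v$ and second component $\frac{\Box v}{\Box t} - \bigl(\frac{\partial X_p}{\partial q}\bigr)^{\mathsf{T}}\cdot u + \bigl(\frac{\partial X_q}{\partial q}\bigr)^{\mathsf{T}}\cdot v$. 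Equating the two operators and matching the coefficient matrices of $u$ and $v$ in each component yields four matrix equations: from the $u$-coefficient of the first component, $-\frac{\partial X_q}{\partial q} = \bigl(\frac{\partial X_p}{\partial p}\bigr)^{\mathsf{T}}$, which is exactly \eqref{HC1}; from the $v$-coefficient of the first component, $-\frac{\partial X_q}{\partial p} = -\bigl(\frac{\partial X_q}{\partial p}\bigr)^{\mathsf{T}}$, i.e.\ $\frac{\partial X_q}{\partial p}$ is symmetric; from the $u$-coefficient of the second component, $-\frac{\partial X_p}{\partial q} = -\bigl(\frac{\partial X_p}{\partial q}\bigr)^{\mathsf{T}}$, i.e.\ $\frac{\partial X_p}{\partial q}$ is symmetric; and from the $v$-coefficient of the second component, $-\frac{\partial X_p}{\partial p} = \bigl(\frac{\partial X_q}{\partial q}\bigr)^{\mathsf{T}}$, which is the transpose of \eqref{HC1} and hence equivalent to it. Together these four give precisely \eqref{HC1} and \eqref{HC2}.

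To make the identification rigorous I must argue that equality of the two operators on every admissible direction $(u,v)$ forces equality of the coefficient matrices pointwise. This is the step deserving the most care: it uses that the directions $u$ and $v$ range independently over a rich enough class (the spaces $H^\beta_0$ of the Proposition) so that a pointwise linear relation $M(t)\cdot u(t) = 0$ holding for all such $u$ implies $M(t)\equiv 0$, and the $u$- and $v$-coefficients can be separated by choosing directions with one component vanishing. Everything else is a direct comparison. Thus the main (and essentially only) obstacle is confirming this density/fundamental-lemma-type separation argument; the algebraic matching of coefficients is routine. I would therefore state the equivalence as a ``direct identification,'' flag the separation of coefficients as its justification, and note that the redundancy between the first and fourth equations is why only the single condition \eqref{HC1} appears rather than two.
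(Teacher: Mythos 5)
Your proposal is correct and follows exactly the paper's route: the paper derives this theorem ``from a direct identification'' of the Fr\'echet derivative with its adjoint as computed in the preceding Proposition, which is precisely the coefficient-matching you carry out (and your four matrix equations, with the noted redundancy between the first and fourth, are exactly right). Your additional care in justifying the separation of coefficients via a fundamental-lemma-type argument on the variation spaces $H^\beta_0$ is a point the paper leaves implicit, so your write-up is if anything more complete than the original.
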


\begin{remark}
One can see that the Helmholtz conditions are the same as in the classical,
discrete, time-scale, and stochastic cases. We expected such a result because
Cresson's quantum calculus provides a quantum Leibniz rule and a quantum version
of the fundamental theorem of calculus. If such properties of an underlying
calculus exist, then the Helmholtz conditions will always be the same
up to some conditions on the working space of functions.
\end{remark}

We now obtain the main result of this paper, which is the
\emph{Helmholtz theorem for nondifferentiable Hamiltonian systems}.

\begin{theorem}[Nondifferentiable Hamiltonian Helmholtz theorem]
Let $X(q,p)$ be a vector field defined by $X(q,p)^\mathsf{T}= (X_q(q,p) , X_p(q,p) )$.
The nondifferentialble system of equations \eqref{eq:NDdiffsys} is Hamiltonian
if and only if the associated quantum differential operator $O_{\Box,X}$ given
by \eqref{eq:NDoperatorO} has a  self-adjoint Fr\'echet derivative with respect
to the $L^2$ symplectic scalar product. In this case, the Hamiltonian is given by
\begin{equation}
\label{ham_form}
H(q,p)=\int_{0}^{1}\left[p \cdot X_q(\lambda q, \lambda p)
- q\cdot X_p(\lambda q, \lambda p) \right]d\lambda.
\end{equation}
\end{theorem}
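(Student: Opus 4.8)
The plan is to reduce the statement to the already-established \emph{Nondifferentiable integrability conditions} theorem. By that result, the operator $O_{\Box,X}$ given by \eqref{eq:NDoperatorO} has a self-adjoint Fréchet derivative with respect to the $L^2$ symplectic scalar product if and only if conditions \eqref{HC1} and \eqref{HC2} hold. Hence it suffices to prove that system \eqref{eq:NDdiffsys} is Hamiltonian if and only if \eqref{HC1}--\eqref{HC2} are satisfied, and that in the affirmative case the function $H$ defined by \eqref{ham_form} is a Hamiltonian for \eqref{eq:NDdiffsys}. Recall that, by definition, \eqref{eq:NDdiffsys} is Hamiltonian precisely when there exists a function $H(q,p)$ with $X_q = \partial H/\partial p$ and $X_p = -\partial H/\partial q$, so that \eqref{eq:NDdiffsys} coincides with the nondifferentiable Hamiltonian system \eqref{def_NDhamilton}.

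For the direct implication, I would assume such an $H$ exists and may be taken of class $\mathcal{C}^2$. Substituting $X_q = \partial H/\partial p$ and $X_p = -\partial H/\partial q$ into the left-hand sides of \eqref{HC1} and \eqref{HC2} turns these into statements about the second derivatives of $H$: condition \eqref{HC1} becomes $\frac{\partial^2 H}{\partial q\,\partial p} - \left(\frac{\partial^2 H}{\partial p\,\partial q}\right)^\mathsf{T}=0$, while \eqref{HC2} asserts that the pure Hessian blocks $\frac{\partial^2 H}{\partial p^2}$ and $\frac{\partial^2 H}{\partial q^2}$ are symmetric. All three follow at once from Schwarz's theorem on the equality of mixed second-order partial derivatives, so \eqref{HC1}--\eqref{HC2} hold.

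For the converse, I would assume \eqref{HC1}--\eqref{HC2} and verify directly that the candidate $H$ in \eqref{ham_form} satisfies $\partial H/\partial p = X_q$ and $\partial H/\partial q = -X_p$. Differentiating \eqref{ham_form} under the integral sign, which is legitimate since $X_q,X_p\in\mathcal{C}^1$, the $k$-th component of $\partial H/\partial p$ produces the term $X_{q,k}(\lambda q,\lambda p)$ together with two sums of first partials of $X_q$ and $X_p$ weighted by $\lambda p_i$ and $\lambda q_i$. Using the symmetry of $\frac{\partial X_q}{\partial p}$ from \eqref{HC2} and the identity $\frac{\partial X_{q,k}}{\partial q_i} = -\frac{\partial X_{p,i}}{\partial p_k}$ coming from \eqref{HC1}, these sums combine into $\lambda\sum_i\left(q_i\frac{\partial X_{q,k}}{\partial q_i}+p_i\frac{\partial X_{q,k}}{\partial p_i}\right)$, so that the full integrand equals $\frac{d}{d\lambda}\bigl[\lambda\,X_{q,k}(\lambda q,\lambda p)\bigr]$. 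Integrating in $\lambda$ over $[0,1]$ then collapses to the boundary value $X_{q,k}(q,p)$, giving $\partial H/\partial p = X_q$; the computation of $\partial H/\partial q = -X_p$ is entirely symmetric, using the other half of \eqref{HC2} together with \eqref{HC1}. This exhibits $H$ as a Hamiltonian for \eqref{eq:NDdiffsys} and closes both implications.

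The point I expect to be decisive, rather than genuinely difficult, is that \eqref{HC1}--\eqref{HC2} are purely algebraic conditions on $X=(X_q,X_p)$ regarded as an ordinary $\mathcal{C}^1$ vector field on $\R^{2d}$: no quantum derivative enters them. All the quantum-calculus content has already been spent in the preceding Proposition, where the quantum Leibniz rule and the quantum fundamental theorem of calculus were used to compute $DO^*_{\Box,X}$ and thereby translate self-adjointness into \eqref{HC1}--\eqref{HC2}. Consequently the remaining work is the classical Poincaré-lemma construction of a potential for the conservative field $(-X_p,X_q)$, identical to Santilli's derivation in the smooth case. The only care required is the bookkeeping with matrix transposition when $d>1$, that is, tracking which index is summed in \eqref{HC1}, and the routine justification of differentiation under the integral sign in \eqref{ham_form}.
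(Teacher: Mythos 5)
Your proposal is correct and follows essentially the same route as the paper's own proof: Schwarz's lemma for the direct implication, and for the converse, differentiation of \eqref{ham_form} under the integral sign, use of \eqref{HC1}--\eqref{HC2} to rewrite each integrand as $\frac{\partial}{\partial\lambda}\left[\lambda X_{q}(\lambda q,\lambda p)\right]$ (respectively $-\frac{\partial}{\partial\lambda}\left[\lambda X_{p}(\lambda q,\lambda p)\right]$), followed by integration in $\lambda$. Your added remarks on index bookkeeping and on the fact that all quantum-calculus content was already spent in the preceding Proposition are accurate refinements, not deviations.
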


\begin{proof}
If $X$ is Hamiltonian, then there exists a function $\fonctionsansdef{H}{\R^d
\times\C^d}{\C}$ such that $H(q,p)$ is holomorphic with respect to $v$
and differentiable with respect to $q$ and $X_q=\frac{\partial H}{\partial p}$
and $X_p=-\frac{\partial H}{\partial q}$. The nondifferentiable integrability
conditions are clearly verified using Schwarz's lemma.
Reciprocally, we assume that $X$ satisfies the nondifferentiable integrability
conditions. We will show that $X$ is Hamiltonian with respect to the Hamiltonian
\begin{equation}
H(q,p)=\int_{0}^{1}\left[p \cdot X_q(\lambda q, \lambda p)
- q\cdot X_p(\lambda q, \lambda p) \right]d\lambda,
\end{equation}
that is, we must show that
\begin{equation}
X_q(q,p)=\frac{\partial H(q,p)}{\partial p}
\quad \text{and} \quad X_p(q,p)=-\frac{\partial H(q,p)}{\partial q}.
\end{equation}
We have
\begin{equation}
\begin{aligned}
\frac{\partial H(q,p)}{\partial q} &=\int_{0}^{1} \left[p
\cdot \lambda \frac{\partial X_q(\lambda q,\lambda p)}{\partial q}
- X_p(\lambda q,\lambda p) - q\cdot \lambda \frac{\partial X_p(\lambda q,
\lambda p)}{\partial q} \right]d\lambda, \\
\frac{\partial H(q,p)}{\partial p} &=\int_{0}^{1} \left[X_p(\lambda q,\lambda p)
+ p\cdot \lambda \frac{\partial X_q(\lambda q,\lambda p)}{\partial p}
- q\cdot \lambda \frac{\partial X_p(\lambda q,\lambda p)}{\partial p}
\right]d\lambda.
\end{aligned}
\end{equation}
Using the nondifferentiable integrability conditions, we obtain
\begin{equation}
\begin{aligned}
\frac{\partial H(q,p)}{\partial q} &=\int_{0}^{1}
- \frac{\partial \left(\lambda X_p(\lambda q,
\lambda p)\right)}{\partial \lambda} d\lambda = - X_p(q,p), \\
\frac{\partial H(q,p)}{\partial q} &=\int_{0}^{1}
\frac{\partial \left(\lambda X_q(\lambda q,\lambda p)\right)}{\partial
\lambda} d\lambda = X_q(q,p),
\end{aligned}
\end{equation}
which concludes the proof.
\end{proof}

% ------------------------------------------

\section{Applications}
\label{sec:Appl}

We now provide two illustrative examples of our results: one with the formulation
of dynamical systems with linear parts and another with Newton's equation,
which is particularly useful to study partial differentiable equations such
as the Navier--Stokes equation. Indeed, the Navier--Stokes equation
can be recovered from a Lagrangian structure with Cresson's quantum calculus
\cite{cresson_greff}. For more applications see \cite{cresson-pierret2}.

Let $0<\alpha<1$ and let $(q,p)\in H^{\alpha}\left(I,\mathbb{R}^{d}\right)
\times H^{\alpha}\left(I,\mathbb{C}^{d}\right)$ be such that
$\frac{\Box q}{\Box t}\in H^{\alpha}\left(I, \mathbb{C}^{d}\right)$
and $\frac{\Box q}{\Box t}\in H^{\alpha}\left(I, \mathbb{C}^{d}\right)$.

% ------------------------------------------

\subsection{The linear case}

Let us consider the discrete nondifferentiable system
\begin{gather}
\left\{
\begin{split}
\frac{\Box q}{\Box t}&=\alpha q + \beta p,\\
\frac{\Box p}{\Box t}&=\gamma q + \delta p,
\end{split}
\right.
\label{ex1_eq}
\end{gather}
where $\alpha$, $\beta$, $\gamma$ and $\delta$ are constants. The Helmholtz
condition \eqref{HC2} is clearly satisfied. However, the system \eqref{ex1_eq}
satisfies the condition \eqref{HC1} if and only if $\alpha+\delta=0$.
As a consequence, linear Hamiltonian nondifferentiable equations are of the form
\begin{gather}
\left\{
\begin{split}
\frac{\Box q}{\Box t}&=\alpha q + \beta p, \\
\frac{\Box p}{\Box t}&=\gamma q -\alpha p.
\end{split}
\right.
\end{gather}
Using formula \eqref{ham_form}, we compute explicitly
the Hamiltonian, which is given by
\begin{equation}
H(q,p)=\frac{1}{2}\left(\beta p^2-\gamma q^2\right) + \alpha q\cdot p \ .
\end{equation}

% ------------------------------------------

\subsection{Newton's equation}

Newton's equation (see \cite{arno}) is given by
\begin{gather}
\left\{
\begin{split}
\dot{q}&=p/m, \\
\dot{p}&=-U'(q),
\end{split}
\right.
\label{ex2_eqcont}
\end{gather}
with $m\in \R^+$ and $q,p\in \R^d$. This equation possesses
a natural Hamiltonian structure with the Hamiltonian given by
\begin{equation}
\label{eq:H:ex:New}
H(q,p)=\frac{1}{2m}p^2+U(q).
\end{equation}
Using Cresson's quantum calculus, we obtain
a natural nondifferentiable system given by
\begin{gather}
\left\{
\begin{split}
\frac{\Box q}{\Box t}&=p/m, \\
\frac{\Box p}{\Box t}&=-U'(q).
\end{split}
\right.
\label{ex2_eq1}
\end{gather}
The Hamiltonian Helmholtz conditions are clearly satisfied.

\begin{remark}
It must be noted that Hamiltonian \eqref{eq:H:ex:New}
associated with \eqref{ex2_eq1}
is recovered by formula \eqref{ham_form}.
\end{remark}

% ------------------------------------------

\section{Conclusion}
\label{sec:conclu}

We proved a Helmholtz theorem for nondifferentiable equations,
which gives necessary and sufficient conditions for the existence
of a Hamiltonian structure. In the affirmative case,
the Hamiltonian is given. Our result extends
the results of the classical case when restricting attention
to differentiable functions. An important complementary result
for the nondifferentiable case is to obtain the
Helmholtz theorem in the Lagrangian case.
This is nontrivial and will be subject of future research.

% ------------------------------------------

\section*{Acknowledgments}

This work was supported by FCT and CIDMA
through project UID/MAT/04106/2013. The first author
is grateful to CIDMA and DMat-UA for the hospitality
and good working conditions during his visit
at University of Aveiro. The authors would like 
to thank an anonymous referee for careful reading 
of the submitted paper and for useful suggestions.

% ------------------------------------------

% ------------------------------------------

\end{document}